\theoremstyle{plain}
\newtheorem{theorem}{Theorem}[section]
\newtheorem{corollary}[theorem]{Corollary}
\newtheorem{proposition}[theorem]{Proposition}
\theoremstyle{definition}
\begin{document}

\begin{center}
{\Large \bf Dynamic approach to k-forcing}\\[5ex]

\begin{multicols}{2}

Yair Caro\\
{\small University of Haifa-Oranim\\}

\columnbreak

Ryan Pepper\footnote{corresponding author: pepperr@uhd.edu}\\
{\small University of Houston-Downtown\\}

\end{multicols}

\textbf{Abstract}

\end{center}


\noindent 
The k-forcing number of a graph is a generalization of the zero forcing number. In this note, we give a greedy algorithm to approximate the k-forcing number of a graph. Using this dynamic approach, we give corollaries which improve upon two theorems from a recent paper of Amos, Caro, Davila and Pepper \cite{ACDP}, while also answering an open problem posed by Meyer \cite{Meyer}.

\noindent  \textbf{Key words:} zero forcing set, zero forcing number, $k$-forcing, $k$-forcing number, rank, nullity.

\section{Introduction and Key Definitions}
Throughout this paper, all graphs are simple, undirected and finite. Let $G=(V,E)$ be a graph. We will use the basic notation: $n=n(G)=|V|$, $m=m(G)=|E|$, $\Delta(G)$, and $\delta(G)$; to denote respectively the order, size, maximum degree and minimum degree of $G$.  To denote the degree of a vertex $v$, we will write $deg(v)$. Other necessary definitions will be presented throughout the paper as needed, and for basic graph theory definitions, the reader can consult \cite{West}. 

Now we introduce and define the $k$-forcing number.  Let $k \leq \Delta$ be a positive integer.  A set $S \subseteq V$ is a \emph{$k$-forcing set} if, when its vertices are initially colored (state 1) -- while the remaining vertices are intitially non-colored (state 0) -- and the graph is subjected to the following color change rule, all of the vertices in $G$ will eventually become colored (state 1). A colored vertex with at most $k$ non-colored neighbors will cause each non-colored neighbor to become colored.  The \emph{$k$-forcing number}, which we will denote with $F_k=F_k(G)$, is the cardinality of a smallest $k$-forcing set.  We will call the discrete dynamical process of applying this color change rule to $S$ and $G$ the \emph{$k$-forcing process}. If a vertex $v$ causes a vertex $w$ to change colors during the $k$-forcing process, we say that $v$ \emph{$k$-forces} $w$ (and we note here that a vertex can be $k$-forced by more than one other vertex). Our paper is about upper bounds on the $k$-forcing number.

This concept generalizes the recently introduced but heavily studied notion of the zero forcing number of a graph, which is denoted $Z=Z(G)$.  Indeed, $F_1(G)=Z(G)$, and throughout this paper, we will denote the zero forcing number with $F_1(G)$. The zero forcing number was introduced independently in \cite{AIM} and \cite{Burgarth}.  In \cite{AIM}, it is introduced to bound from below the minimum rank of a graph, or equivalently, to bound from above the maximum nullity of a graph.  Namely, if $G$ is a graph whose vertices are labeled from $1$ to $n$, then let $M(G)$ denote the maximum nullity over all symmetric real valued matrices where, for $i \ne j$, the $ij^{th}$ entry is nonzero if and only if $\{i,j\}$ is an edge in $G$. Then, the zero forcing number is an upper bound on $M(G)$, that is, $F_1(G)=Z(G) \ge M(G)$. In \cite{Burgarth}, it is indirectly introduced in relation to a study of control of quantum systems.  Besides its origins in the minimum rank/maximum nullity problem and the control of quantum systems, one can imagine other applications in the spread of opinions or disease in a social network (as described for a similar invariant by Dreyer and Roberts in \cite{Dreyer}). Some of the many other papers written about zero forcing number are: \cite{ACDP, Chilakammari, Edholm, Eroh, Hogben, Meyer, Row2, Row, Yi}. In most of these papers, the tools used to study the zero forcing number (or $k$-forcing number in \cite{ACDP}) were algebraic and non-constructive. In this paper, the main result is a constructive algorithm and the arguments are graph theoretic.
\section{Main result}
Before stating our main theorem, we observe this preliminary result.
\begin{proposition}
Let $k$ be a positive integer. If $G=(V,E)$ is a connected graph with maximum degree $\Delta \leq k$, then $F_k(G)=1$.
\end{proposition}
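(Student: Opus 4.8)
The plan is to show that \emph{every} single vertex is already a $k$-forcing set, which immediately gives $F_k(G)\le 1$; combined with the trivial lower bound $F_k(G)\ge 1$ (the empty set cannot color a nonempty graph), this yields $F_k(G)=1$. So pick an arbitrary vertex $v\in V$ and set $S=\{v\}$. The key observation, and the only place the hypothesis is used, is that since every vertex has degree at most $\Delta\le k$, any colored vertex has at most $k$ non-colored neighbors at \emph{every} stage of the process; hence the color change rule always permits a colored vertex that still has a non-colored neighbor to force all of them.

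Next I would show that running the $k$-forcing process from $S=\{v\}$ colors all of $V$, by induction on the distance $d$ from $v$ (equivalently, by following a breadth-first search tree rooted at $v$). The base case $d=0$ is $v$ itself. For the inductive step, suppose every vertex at distance at most $d$ from $v$ has been colored; let $w$ be a vertex at distance $d+1$ and let $u$ be a neighbor of $w$ at distance $d$. Then $u$ is colored, and by the observation above $u$ is free to force, so either $w$ is already colored or $u$ forces it; in all cases $w$ becomes colored. Since $G$ is connected and finite, every vertex lies at finite distance from $v$, so the process eventually colors all of $G$, proving $S$ is a $k$-forcing set.

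I do not expect a genuine obstacle here: the statement is essentially a sanity check that the $k$-forcing rule behaves well when $k$ is at least the maximum degree. The one point to state carefully is exactly why a force is always available, namely the inequality ``number of non-colored neighbors of a colored vertex'' $\le \deg \le \Delta \le k$; without the hypothesis $\Delta\le k$ a colored vertex could have more than $k$ non-colored neighbors and the induction step would break down.
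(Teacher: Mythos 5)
Your proposal is correct and follows essentially the same argument as the paper: color an arbitrary single vertex and observe that the hypothesis $\deg \le \Delta \le k$ guarantees every colored vertex can always force all of its non-colored neighbors, so connectivity propagates the coloring to all of $G$. Your explicit induction on distance from $v$ is just a more formal rendering of the paper's informal "this process repeats until all vertices are colored."
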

\begin{proof}
Assume $G$ is a connected graph with $\Delta \leq k$. Let $v$ be a vertex in $G$ and color $v$. Since $deg(v) \leq \Delta \leq k$, $v$ has at most $k$ uncolored neighbors. Begin the $k$-forcing process and $v$ will force each of its neighbors to be colored. Each of those in turn will force their neighbors to be colored, since they also have degree at most $k$. Since $G$ is connected, all the vertices will become colored and $F_k(G)=1$, as claimed. 
\end{proof}

This gives, for example, that the $3$-forcing number of connected cubic graphs is exactly equal to one. Now that the trivial case is dealt with, we move on to the more interesting case that $\Delta \geq k+1$, and present the main result below.

\begin{theorem}\label{main}
Let $k$ be a positive integer and let $G=(V,E)$ be a connected graph with minimum degree $\delta$ and maximum degree $\Delta \geq k+1$.

i) If $\delta < \Delta = k+1$, then $F_k(G) =1$.

ii) If $\delta =\Delta=k+1$, then $F_k(G) =2$.

iii) Otherwise, if $\Delta \geq k+2$, then the following inequality holds, 
\[
F_k(G) \leq \frac{(\Delta-k-1)n(G) + \max\{\delta(k+1-\Delta)+k, k(\delta-\Delta +2)\}}{\Delta -1}.
\]
\end{theorem}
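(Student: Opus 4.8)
The plan is to handle parts (i) and (ii) by short direct forcing arguments in the spirit of the Proposition, and to prove part (iii) by analyzing a greedy algorithm that builds a $k$-forcing set. For (i), since $\delta<\Delta=k+1$ there is a vertex $v$ with $deg(v)=\delta\le k$. I would color $v$ and start the $k$-forcing process: $v$ has at most $k$ uncolored neighbors, so it forces all of them, and from then on every boundary vertex is either $v$ (with at most $\delta\le k$ uncolored neighbors) or was itself colored by a force and hence has a colored neighbor and at most $\Delta-1=k$ uncolored neighbors; so the process cannot stall before every vertex is colored, and connectedness gives $F_k(G)\le 1$. For (ii), $G$ is $(k+1)$-regular, so a single colored vertex has $k+1>k$ uncolored neighbors and the process never starts; thus $F_k(G)\ge 2$. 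Coloring both endpoints of an edge (one exists since $\Delta=k+1\ge 2$), that first vertex now has exactly $k$ uncolored neighbors and forces, after which the argument of (i) completes the coloring, so $F_k(G)=2$.

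For (iii) the algorithm is: let $v_1$ be a vertex of minimum degree, set $S=\{v_1\}$ and color $v_1$; then repeat the following. Run the $k$-forcing process from the current colored set $C$ until no force is possible; if $C=V$ stop; otherwise pick a boundary vertex $u\in C$ (one with an uncolored neighbor), choosing it to also have a colored neighbor whenever $|C|\ge 2$, let $p\ge k+1$ be its number of uncolored neighbors, add $p-k$ of those uncolored neighbors to $S$ and to $C$, and repeat. Since $u$ then has exactly $k$ uncolored neighbors, it forces on the next pass, so the algorithm makes progress and terminates; the resulting $S$ is a $k$-forcing set (coloring all of $S$ at once and running the process colors at least as much at each stage as the algorithm did), so $F_k(G)\le s$ where $s=|S|$.

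Two facts drive the count. First, $C$ stays connected throughout, since each force adds a vertex adjacent to $C$ and each seed after $v_1$ is chosen adjacent to the already-connected $C$; consequently, once $|C|\ge 2$ every boundary vertex has a colored neighbor, so every seeding step then adds $p-k\le\Delta-1-k$ seeds. Second, I would split on $\delta$. If $\delta\le k$, the first pass already forces the $\delta$ neighbors of $v_1$, so $|C|\ge\delta+1$ afterward and every seeding step adds at most $\Delta-1-k$ seeds; if $\delta\ge k+1$, the first pass stalls at $C=\{v_1\}$, the first seeding step adds exactly $\delta-k$ seeds, and every later seeding step adds at most $\Delta-1-k$. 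Letting $r$ be the number of seeding steps and $b_i$ the number of seeds added at step $i$ (so $\sum b_i=s-1$), counting the newly colored vertices in each pass gives $n\ge|C_1|+\sum_{i=1}^r(b_i+k)$, hence $r\le(n-s)/k$ in the second case and $r\le(n-s-\delta)/k$ in the first. Feeding in $b_i\le\Delta-1-k$ (and $b_1=\delta-k$ in the second case) and solving the resulting linear inequalities for $s$ yields
\[
s(\Delta-1)\le(\Delta-k-1)n+\bigl(\delta(k+1-\Delta)+k\bigr)\quad\text{and}\quad s(\Delta-1)\le(\Delta-k-1)n+k(\delta-\Delta+2),
\]
respectively; since each right-hand quantity is at most the one with the maximum of the two bracketed terms, part (iii) follows after dividing by $\Delta-1>0$.

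The main obstacle is the bookkeeping in (iii): one must set the algorithm up so that the colored set is always connected — this is exactly what guarantees a boundary vertex with a colored neighbor, and hence the crucial $\Delta-1$ rather than $\Delta$ — and then track how many vertices each pass colors tightly enough to bound the number $r$ of seeding steps. Isolating the first seeding step (the only one that may cost $\delta-k$ seeds, which happens precisely when $\delta\ge k+1$) and combining the two resulting linear inequalities into the stated $\max$ is where the exact constants come from; the remaining algebra is routine.
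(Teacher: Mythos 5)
Your proposal is correct and follows essentially the same route as the paper: the same greedy algorithm that seeds a minimum-degree vertex, reseeds at most $\deg(u)-k-1$ neighbors of a stalled boundary vertex $u$ (which has a colored neighbor, giving the crucial $\Delta-1$), and splits into the cases $\delta\le k$ and $\delta\ge k+1$ to produce the two entries of the $\max$. The only difference is presentational: you bound the number of seeding steps by explicit counting of $r$ and the $b_i$, whereas the paper uses a ratio argument comparing seeds added to vertices forced at each stall; your accounting is, if anything, slightly more careful.
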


\begin{proof}
First, we prove i). Assume $\delta < \Delta = k+1$. Let $v$ be a vertex in $G$ of degree $\delta$. Color $v$ and observe that since $v$ has at most $k$ non-colored neighbors, all of its neighbors become colored during the first step of the $k$-forcing process. Since each of those vertices now have at least one colored neighbor and degree at most $k+1$, they also have at most $k$ non-colored neighbors. Thus, they each force their neighborhoods to become colored. Since $G$ is connected, this process repeats until all vertices are colored. Hence, $F_k(G)=1$, as required.

Next, we prove ii). Assume $\delta =\Delta=k+1$.  Clearly, no single vertex will constitute a $k$-forcing set, since each vertex, if it were the only colored vertex, would have more than $k$ non-colored neighbors -- whence the $k$-forcing process would never begin. This means that $F_k(G) \geq 2$. Now, let $\{u,v\}$ be a pair of adjacent vertices in $G$. Color both $u$ and $v$ and no other vertices. Since $u$ has now $k$ non-colored neighbors, and $v$ has now $k$ non-colored neighbors, both $u$ and $v$ force all their neighbors to change color. Now, each of these vertices has at most $k$ non-colored neighbors since they are all adjacent to either $u$ or to $v$. Thus, they in turn will force all of their neighbors to change color on the second step of the $k$-forcing process. Since $G$ is connected, this process will clearly continue until all vertices of $G$ are colored. Therefore, $\{u,v\}$ is a $k$-forcing set and $F_k(G)\leq 2$. This means that $F_k(G)=2$, as required.

Last, we prove iii). Assume that $\Delta \geq k+2$. Let $v$ be a vertex of minimum degree $\delta$. Color $v$ and $\max\{0,\delta -k\}$ of its neighbors, and call this set $S$.  Apply the $k$-forcing process to $G$ using $S$ as the initial set of colored vertices. Since there are at most $k$ uncolored neighbors of $v$, all of the neighbors of $v$ become colored at the first step. Now continue the $k$-forcing process as long as possible. 

Suppose the process does not stop until all of the vertices in $G$ are colored. In this case, $S$ is a $k$-forcing set, so
\[
F_k(G) \leq |S| = |\{v\}| + \max\{0,\delta - k\} = \max\{1,\delta -k +1\}.
\]

On the other hand, suppose the $k$-forcing process stops before all of $G$ is colored. Since $G$ is connected, there is an uncolored vertex $w$ adjacent to a colored vertex $u \neq v$ (If $u=v$, then $w$ would not have been uncolored and the process would not have stopped there, since $v$ already $k$-forced its neighborhood to change color). We now greedily color enough neighbors of $u$ so it can $k$-force $w$ and the process can continue. Let $a(u)$ denote the number of neighbors of $u$ we have to color in order for $u$ to $k$-force its remaining neighbors, including $w$. Note that $a(u) \leq deg(u)-k-1$, since we need at most $deg(u)-k$ neighbors of $u$ colored in order to $k$-force the others, and at least one of the neighbors of $u$ was already colored because it $k$-forced $u$ to change color as $u \neq v$. 
Hence, the proportion of vertices colored by us to total vertices colored, whether by us or by the $k$-forcing process, is:
\[
\frac{a(u)}{a(u)+k} \leq \frac{deg(u)-k-1}{deg(u)-1} \leq \frac{\Delta - (k+1)}{\Delta-1},
\]
where both inequalities comes from monotonicity.

Now, let the process continue as before, and iterate the above steps. Each stop of the process requires coloring more vertices according to the proportion indicated in the upper bound above. We eventually arrive, by greedy construction, at a $k$-forcing set $T$ which satisfies:
\[
|T| \leq \frac{\Delta - (k+1)}{\Delta-1}(n(G)-(\delta+1)) + \max\{1,\delta-k+1\}.
\]
This can be written as,
\[
|T| \leq \frac{(\Delta - k-1)n(G)}{\Delta-1} + \frac{(\Delta-1)\max\{1,\delta-k+1\} - (\Delta-k-1)(\delta+1)}{\Delta-1},
\]
which, after some algebraic simplifications, becomes,
\[
|T| \leq \frac{(\Delta - k-1)n(G)}{\Delta-1} + \frac{\max\{\delta(k+1-\Delta)+k, k(\delta-\Delta +2)\}}{\Delta-1}.
\]
Finally, combining the numerators and recognizing that $F_k(G) \leq |T|$, since $T$ is a $k$-forcing set, the theorem is proven.

\end{proof}

\section{Corollaries of the main result}

In this section, we highlight a few interesting and relevant corollaries to Theorem \ref{main}.  First, by considering the special case of $k=1$, for the zero forcing number, our greedy algorithm produces the following result.

\begin{corollary}\label{cor1}
Let $G=(V,E)$ be a connected graph with maximum degree $\Delta$ and minimum degree $\delta$. Then,
\[
Z(G)=F_1(G) \leq \frac{(\Delta - 2)n(G) - (\Delta -\delta) +2}{\Delta -1}.
\]
\end{corollary}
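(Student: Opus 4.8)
The plan is to obtain Corollary~\ref{cor1} directly from Theorem~\ref{main} by specializing to $k=1$ and simplifying. Note first that the stated bound is only meaningful when $\Delta\geq 2$ (so that the denominator $\Delta-1$ is positive), and this is precisely the hypothesis $\Delta\geq k+1$ of Theorem~\ref{main} with $k=1$. I would therefore split into the regime $\Delta=2$, handled by parts i) and ii) of the theorem, and the regime $\Delta\geq 3$, handled by part iii).

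First I would dispose of the case $\Delta=2$, where $G$ is a path or a cycle. If $\delta<\Delta=2$ then $\delta=1$, part i) gives $F_1(G)=1$, and the right-hand side of the claimed inequality evaluates to $\frac{0\cdot n(G)-(2-1)+2}{2-1}=1$; so the bound holds with equality. If $\delta=\Delta=2$, part ii) gives $F_1(G)=2$, while the right-hand side is $\frac{0\cdot n(G)-0+2}{1}=2$, again equality. Besides settling this case, these computations serve as a check that the displayed formula is the correct specialization.

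Next I would treat the main case $\Delta\geq 3$. Here part iii) of Theorem~\ref{main} with $k=1$ gives
\[
F_1(G)\leq \frac{(\Delta-2)n(G)+\max\{\,\delta(2-\Delta)+1,\ \delta-\Delta+2\,\}}{\Delta-1},
\]
so all that remains is to determine which term of the maximum dominates. A one-line computation shows
\[
(\delta-\Delta+2)-\bigl(\delta(2-\Delta)+1\bigr)=(\Delta-1)(\delta-1)\geq 0,
\]
the inequality holding because $\Delta\geq 3$ and $\delta\geq 1$ (every vertex of a connected graph on at least two vertices has a neighbor). Hence the maximum equals $\delta-\Delta+2=-(\Delta-\delta)+2$, and substituting this into the displayed bound produces exactly
\[
F_1(G)\leq\frac{(\Delta-2)n(G)-(\Delta-\delta)+2}{\Delta-1},
\]
which is the assertion of the corollary.

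I do not expect a real obstacle: this is a routine specialization of the main theorem, and the only point requiring any care is verifying the identity $(\delta-\Delta+2)-(\delta(2-\Delta)+1)=(\Delta-1)(\delta-1)$ and noting its sign, together with the separate check that the boundary cases $\Delta=2$ match. A pleasant byproduct of the argument is that paths and cycles attain equality in the corollary, which confirms the specialization and shows the bound is sharp at the small-$\Delta$ end.
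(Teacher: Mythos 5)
Your proof is correct and matches the paper's intent exactly: the corollary is obtained by setting $k=1$ in Theorem \ref{main}, and your computation $(\delta-\Delta+2)-(\delta(2-\Delta)+1)=(\Delta-1)(\delta-1)\geq 0$ correctly identifies which branch of the maximum survives, which is the only nontrivial step the paper leaves implicit. Your separate check of the $\Delta=2$ boundary cases via parts i) and ii) is a sensible addition, since part iii) of the theorem requires $\Delta\geq k+2=3$.
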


Next, we can simplify the expression on the right side of the inequality from Theorem \ref{main} by approximating $\max\{\delta(k+1-\Delta)+k, k(\delta-\Delta +2)\}$ as below.

\begin{corollary}\label{cor}
Let $k$ be a positive integer and let $G=(V,E)$ be a connected graph with minimum degree $\delta$ and maximum degree $\Delta \geq k+2$. Then, the following inequality holds, with equality holding only if $G$ is regular of degree $k+2$,
\[
F_k(G) \leq \frac{(\Delta-k-1)n(G) + 2k}{\Delta -1}.
\]
\end{corollary}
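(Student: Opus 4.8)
\textit{Proof proposal.} The plan is to obtain this as a direct consequence of part (iii) of Theorem~\ref{main}, by replacing the somewhat unwieldy quantity $\max\{\delta(k+1-\Delta)+k,\,k(\delta-\Delta+2)\}$ appearing in the numerator with the cleaner upper bound $2k$, and then identifying when equality can survive. Since $\Delta\ge k+2$, Theorem~\ref{main}(iii) applies and gives
\[
F_k(G)\le\frac{(\Delta-k-1)n(G)+\max\{\delta(k+1-\Delta)+k,\,k(\delta-\Delta+2)\}}{\Delta-1},
\]
so, because $\Delta-1>0$, it suffices to prove that $\max\{\delta(k+1-\Delta)+k,\,k(\delta-\Delta+2)\}\le 2k$.

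I would bound the two arguments of the maximum separately. For the second, $\delta\le\Delta$ and $k\ge 1$ give $k(\delta-\Delta+2)\le 2k$, with equality precisely when $\delta=\Delta$. For the first, note that $k+1-\Delta\le-1<0$ since $\Delta\ge k+2$, so $\delta(k+1-\Delta)$ is largest when $\delta$ is smallest; using $\delta\ge 1$,
\[
\delta(k+1-\Delta)+k\le(k+1-\Delta)+k=2k+1-\Delta\le 2k+1-(k+2)=k-1<2k.
\]
Combining the two estimates yields $\max\{\delta(k+1-\Delta)+k,\,k(\delta-\Delta+2)\}\le 2k$, and substituting this into the displayed bound gives $F_k(G)\le\dfrac{(\Delta-k-1)n(G)+2k}{\Delta-1}$, which is the claimed inequality.

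For the equality clause, suppose $F_k(G)=\dfrac{(\Delta-k-1)n(G)+2k}{\Delta-1}$. Then the bound coming from Theorem~\ref{main}(iii) must also be attained, so the maximum in its numerator must equal $2k$; since the first argument is always strictly smaller than $2k$ (indeed at most $k-1$), we are forced to have $k(\delta-\Delta+2)=2k$, i.e.\ $\delta=\Delta$, so $G$ is regular. To sharpen this to regularity of degree exactly $k+2$, I would re-enter the greedy construction behind Theorem~\ref{main}(iii): attaining the bound forces the $k$-forcing set $T$ produced there to be minimum and of size exactly the bound, which in turn forces every inequality used to build $T$ to be an equality — in particular, at each stop of the $k$-forcing process the restarting vertex $u$ must have degree $\Delta$ and exactly one already-colored neighbor, and exactly $\Delta-1$ vertices must be colored between consecutive stops. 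Propagating these rigidity conditions through a regular graph with $\Delta\ge k+2$ is the step I expect to be the real obstacle, and it is there that the value $\Delta=k+2$ should be pinned down; by comparison, the numeric estimates in the first two paragraphs are routine.
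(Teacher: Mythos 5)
Your derivation of the inequality is correct and is exactly the paper's route: the paper offers no justification beyond the remark that one ``approximates'' $\max\{\delta(k+1-\Delta)+k,\,k(\delta-\Delta+2)\}$ by $2k$, and your two elementary estimates (the second argument is at most $2k$ because $\delta\le\Delta$; the first is at most $k-1$ because $\delta\ge 1$ and $k+1-\Delta\le -1$) are precisely what that remark is hiding. Your observation that the maximum equals $2k$ only when $\delta=\Delta$, so that equality forces regularity, likewise matches the only thing the paper says on the subject (its comment after Corollary~\ref{cor3}).

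The gap is the clause ``regular of degree $k+2$,'' which you correctly identify as the real difficulty and then leave unproved; note that the paper gives no argument for it either. More importantly, your suspicion is warranted: the rigidity analysis you sketch cannot succeed, because that clause as stated fails. Take $G=K_n$ with $n\ge k+4$. Then $\Delta=\delta=n-1\ge k+3$, and $F_k(K_n)=n-k$, since in $K_n$ every colored vertex has exactly $n-|S|$ uncolored neighbors, so the process can start only if $n-|S|\le k$ and then finishes immediately. The bound evaluates to
\[
\frac{(n-k-2)n+2k}{n-2}=\frac{(n-k)(n-2)}{n-2}=n-k,
\]
so equality holds for a regular graph of degree $n-1\neq k+2$. (For $k=1$ this is already visible in $K_5$: $Z(K_5)=4=\frac{2\cdot 5+2}{3}$, yet $K_5$ is $4$-regular while $k+2=3$.) The necessary condition that this approach actually yields is regularity, exactly as you proved; do not expect to pin down $\Delta=k+2$ by propagating equality through the greedy construction.
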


In a recent paper of Meyer \cite{Meyer}, it was asked whether there is an upper bound for the zero forcing number of bipartite circulant graphs in terms of the maximum degree $\Delta$ and the order $n(G)$. The corollary above answers this question in the affirmative and in a much more general way, since it is for $k$-forcing and there is no need for the conditions that $G$ be bipartite or that $G$ be a circulant. In particular, by specifying that $k=1$ in the above corollary, we get the corollary below, which is a result previously discovered by Amos, Caro, Davila and Pepper in \cite{ACDP}. 

\begin{corollary}\cite{ACDP}\label{cor3}
Let $G=(V,E)$ be a connected graph with maximum degree $\Delta \geq 2$. Then,
\[
Z(G)=F_1(G) \leq \frac{(\Delta-2)n(G) + 2}{\Delta -1}.
\]
\end{corollary}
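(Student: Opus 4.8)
The plan is to derive Corollary \ref{cor3} directly from Corollary \ref{cor} by specializing $k=1$. This is purely a substitution argument, so the work is checking that the hypothesis of Corollary \ref{cor} is satisfied and that the right-hand side collapses to the claimed expression. First I would note that setting $k=1$ in the hypothesis $\Delta \geq k+2$ gives exactly the hypothesis $\Delta \geq 3$; this handles all graphs of maximum degree at least $3$. Then, substituting $k=1$ into the bound from Corollary \ref{cor}, the numerator $(\Delta-k-1)n(G)+2k$ becomes $(\Delta-2)n(G)+2$ and the denominator $\Delta-1$ is unchanged, which is precisely the asserted inequality.

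The remaining issue is the gap between the hypothesis $\Delta \geq 2$ claimed in Corollary \ref{cor3} and the hypothesis $\Delta \geq 3$ that the specialization of Corollary \ref{cor} gives. So the second step is to dispose of the case $\Delta = 2$ separately. A connected graph with $\Delta = 2$ is either a path $P_n$ or a cycle $C_n$. For $\Delta = 2$ the right-hand side of the claimed bound is $\frac{0 \cdot n(G) + 2}{1} = 2$, so it suffices to check $F_1(P_n) \leq 2$ and $F_1(C_n) \leq 2$; in fact it is classical that $Z(P_n)=1$ and $Z(C_n)=2$, which one can also recover from Proposition 2.1 (for $P_n$, a leaf forces along the path when $k=1=\Delta$… but here $\Delta = 2 > k$, so instead one invokes Theorem \ref{main}(ii) with $k=1$ for $C_n$, giving $F_1(C_n)=2$, and Theorem \ref{main}(i) for $P_n$, since $\delta = 1 < \Delta = 2 = k+1$, giving $F_1(P_n)=1$). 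Either way the bound of $2$ holds, so the inequality is valid for all $\Delta \geq 2$.

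I do not expect any genuine obstacle here: the entire content of Corollary \ref{cor3} is already contained in Corollary \ref{cor} (which is in turn a consequence of Theorem \ref{main}), and the only care needed is the boundary case $\Delta = 2$, which is elementary. The one place to be slightly careful is the algebraic simplification of the numerator under $k=1$ — ensuring $(\Delta - k - 1)n(G) + 2k = (\Delta-2)n(G)+2$ — but this is immediate. Thus the proof is essentially two lines: apply Corollary \ref{cor} with $k=1$ for $\Delta \geq 3$, and handle paths and cycles directly for $\Delta = 2$.
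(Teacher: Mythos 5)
Your proof is correct and follows essentially the same route as the paper, which obtains Corollary \ref{cor3} by setting $k=1$ in Corollary \ref{cor}. In fact you are slightly more careful than the paper: since Corollary \ref{cor} requires $\Delta \geq k+2 = 3$, the case $\Delta = 2$ is not literally covered by that specialization, and your separate treatment of paths and cycles via Theorem \ref{main}(i) and (ii) properly closes that boundary case.
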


The case of equality for the above corollary is left as an open problem in \cite{ACDP}. In light of Corollary \ref{cor1}, we find a necessary condition for equality to hold is that the graph is regular. This sheds some light on the case of equality of Corollary \ref{cor3}.

\section{Comparison to other bounds}

In this short section, we compare Theorem \ref{main} to two theorems from \cite{ACDP}. First, we present the theorems in question.

\begin{theorem}\label{main1}\cite{ACDP}
Let $k$ be a positive integer and let $G=(V,E)$ be a graph on $n(G)\geq 2$ vertices with maximum degree $\Delta\geq k$ and minimum degree $\delta \geq 1$. Then,
\[
F_k(G) \leq \frac{(\Delta-k+1)n(G)}{\Delta -k+1+\min\{\delta,k\}}.
\]
\end{theorem}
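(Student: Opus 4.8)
The plan is to establish Theorem~\ref{main1} probabilistically, via a uniformly random linear order on the vertices. Let $\sigma\colon V\to\{1,\dots,n\}$ be a uniformly random bijection, thought of as assigning ``ranks.'' Build a vertex set $S=S(\sigma)$ by sweeping through the vertices in increasing rank order: at stage $i$, if the rank-$i$ vertex is not yet colored, add it to $S$ and color it, and then apply the $k$-forcing rule until no further force is possible before passing to stage $i+1$. The process ends with all of $V$ colored, and every coloring not caused by an ``add'' was a legitimate $k$-force, so $S(\sigma)$ is a $k$-forcing set for every choice of $\sigma$. Hence it suffices to exhibit one $\sigma$ with $|S(\sigma)|$ bounded by the claimed quantity, and for that it is enough to bound the expectation; by linearity, $\mathbb{E}[\,|S(\sigma)|\,]=\sum_{v\in V}\Pr[v\in S(\sigma)]$, so the whole theorem reduces to the local estimate $\Pr[v\in S(\sigma)]\le (\Delta-k+1)/(\Delta-k+1+\min\{\delta,k\})$ for each fixed vertex $v$.

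The next step is to describe the event $\{v\in S(\sigma)\}$ concretely. By the construction, $v$ is added to $S(\sigma)$ exactly when it is still uncolored at the moment its rank comes up, i.e.\ exactly when $v$ is never colored by the $k$-forcing process started from the lower-ranked set $A:=\{w:\sigma(w)<\sigma(v)\}$. Let $U$ be the set of vertices never colored by that process, so $\{v\in S(\sigma)\}=\{v\in U\}$; note $U\cap A=\emptyset$ and that every colored vertex adjacent to $U$ has at least $k+1$ of its neighbors in $U$. Now condition on the rank of $v$: conditionally, each of the other $n-1$ vertices lies ``above'' $v$ (has larger rank) independently with some probability $q$, and averaging over the rank of $v$ is the same as integrating $q$ uniformly over $[0,1]$. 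Writing $B$ for the random ``above'' set, the problem becomes an estimate on a single integral over $q\in[0,1]$ of the conditional probability that $v$ remains uncolored.

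The crux, and the step I expect to be the main obstacle, is this local probability bound. The available handle is a necessary condition for $v\in U$: every neighbor $u$ of $v$ is itself uncolored --- in which case $u\in U$, so $u\notin A$ and $u$ lies above $v$ --- or else $u$ is colored, adjacent to $v\in U$, hence has at least $k+1$ neighbors in $U$, and those neighbors avoid $A$ and so lie above $v$; discarding $v$ itself, $u$ then has at least $k$ neighbors other than $v$ lying above $v$. Thus $\{v\in S(\sigma)\}$ is contained in the event ``for every neighbor $u$ of $v$, either $u\in B$, or $u$ has at least $k$ neighbors besides $v$ in $B$.'' Evaluating this event on the complete graphs $K_n$ with $n\ge k+1$, where the bound of Theorem~\ref{main1} is sharp ($F_k(K_n)=n-k$), shows its probability integrates to \emph{exactly} $(\Delta-k+1)/(\Delta-k+1+\min\{\delta,k\})$ there, so the estimate must be carried out with essentially no slack: keeping only one neighbor of $v$, or union-bounding over the neighbors, already fails for $k=2$. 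The real work is therefore to control this conjunction over all $\deg(v)\ge\delta$ neighbors of $v$ simultaneously --- each neighbor outside $B$ ``uses up'' about $k$ vertices of $B$ while each neighbor inside $B$ ``is'' a vertex of $B$ --- while coping with the fact that these events are all increasing in $B$ and hence positively correlated, so the crude bounds point the wrong way. A plausible alternative route, closer in spirit to this paper's own methods, is a direct ``pay-as-you-force'' greedy (color freely whenever the $k$-forcing rule applies; when the process stalls, purchase a minimum-degree vertex together with just enough of its neighbors to make it able to force again), where the parallel obstacle is to verify that every stall yields at least $\min\{\delta,k\}$ newly forced vertices for every $\Delta-k+1$ vertices purchased.
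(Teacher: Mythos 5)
This statement is not proved in the paper at all: Theorem~\ref{main1} is imported from \cite{ACDP} purely for the purpose of comparison with Theorem~\ref{main}, so there is no in-paper argument to measure your proposal against; the original proof in \cite{ACDP} is a direct combinatorial one, not probabilistic.

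Judged on its own terms, your proposal has a genuine gap, and you have located it yourself. The scaffolding is sound: the interleaved ``add-if-uncolored, then force'' sweep does produce a $k$-forcing set $S(\sigma)$ for every $\sigma$ (by monotonicity of the color change rule), linearity of expectation correctly reduces the theorem to a per-vertex bound, and the necessary condition you extract for $v\in S(\sigma)$ --- every neighbor $u$ of $v$ either lies above $v$ or has at least $k$ neighbors besides $v$ above $v$ --- is a correct consequence of the fact that a stalled colored vertex adjacent to an uncolored one must have at least $k+1$ uncolored neighbors. But the entire content of the theorem is then concentrated in the inequality $\Pr[E_v]\le (\Delta-k+1)/(\Delta-k+1+\min\{\delta,k\})$, and this is never established; you only explain why the obvious attacks (single-neighbor bounds, union bounds, treating the conjunction as if independent) fail. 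Your own calibration on $K_n$ shows the relaxation to $E_v$ leaves zero slack there, and a rough computation for a vertex of degree $d\ge k$ whose neighbors have disjoint neighborhoods shows the integral over the quantile of $\sigma(v)$ lands within lower-order terms of the target, so the inequality, if true, is delicate and cannot be waved through. Until that local estimate is proved (or a counterexample rules it out, forcing a sharper necessary condition than $E_v$), the argument is a reduction, not a proof. The alternative ``pay-as-you-force'' greedy you mention at the end has the same status: its key accounting step --- that each stall buys at least $\min\{\delta,k\}$ forced vertices per $\Delta-k+1$ purchased --- is asserted as the parallel obstacle rather than verified.
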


\begin{theorem}\label{main2}\cite{ACDP}
Let $k$ be a positive integer and let $G=(V,E)$ be a $k$-connected graph with $n(G)>k$ vertices and $\Delta \ge 2$. Then,
\[
F_k(G) \leq \frac{(\Delta-2)n(G)+2}{\Delta+k-2},
\]
and this inequality is sharp.
\end{theorem}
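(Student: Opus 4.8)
The plan is to follow the dynamic, greedy philosophy of the proof of Theorem~\ref{main}, but to use $k$-connectivity to tighten the accounting at each stalling point. I would first dispose of the degenerate case $\Delta=2$: a $k$-connected graph with $\Delta=2$ is a path (only possible when $k=1$) or a cycle, and for these $F_k$ is immediate ($F_1(P_n)=1$, $F_1(C_n)=2$, and $F_k(C_n)=1$ for $k\ge 2$), so the displayed inequality is checked by hand, with equality holding for the cycles; this already gives the claimed sharpness. From here on assume $\Delta\ge 3$ and recall that $k$-connectivity forces $\delta\ge k$ (so $\max\{0,\delta-k\}=\delta-k$) and that $n>k$ is consistent.

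For the main case, run the $k$-forcing process from a small seed set $S$ — for instance a minimum-degree vertex $v$ together with $\delta-k$ of its neighbors, exactly as in Theorem~\ref{main}(iii), so $v$ immediately forces $N(v)$ — and continue until the process stalls at a colored set $C\subsetneq V$. Let $\partial C$ be the set of colored vertices having an uncolored neighbor; since the process has stalled, every vertex of $\partial C$ has at least $k+1$ uncolored neighbors. The new ingredient is structural: if $C\setminus\partial C\neq\emptyset$ then $\partial C$ is a vertex cut of $G$ (it separates $C\setminus\partial C$ from $V\setminus C$), so $k$-connectivity gives $|\partial C|\ge k$; and in the remaining case $C=\partial C$ one gets the direct estimate $(k+1)|C|\le e(C,V\setminus C)\le \Delta(n-|C|)$. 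Either way one has quantitative control on how thin the colored/uncolored interface can be.

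At each stall I would then greedily restart the process as cheaply as possible: choose $u\in\partial C$ adjacent to an uncolored vertex $w$ and color just enough uncolored neighbors of $u$ so that $u$ is left with exactly $k$ uncolored neighbors, which it then forces. The goal of the estimate is to show that at such a step we color at most $\Delta-2$ vertices while at least $k$ become colored, which yields the ratio bound $\frac{\text{colored by us}}{\text{total colored at this step}}\le\frac{\Delta-2}{\Delta+k-2}$ (and this is precisely where the denominator $\Delta+k-2$ comes from). To get the factor $\Delta-2$ rather than $\Delta-1$ one must exploit $k$-connectivity to argue that the restart vertex can be taken to have at least two colored neighbors (or that the cascade it triggers forces at least $k$ vertices even when it does not); the cut estimate $|\partial C|\ge k$ from the previous paragraph is the natural lever for this. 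Iterating over all stalls, adding the $|S|$ seed vertices, and absorbing the seed contribution into the additive constant produces a $k$-forcing set $T$ with $F_k(G)\le|T|\le\frac{(\Delta-2)n(G)+2}{\Delta+k-2}$.

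The main obstacle is exactly this ratio estimate: unlike in Theorem~\ref{main}, the naive one-vertex-at-a-time bookkeeping does not by itself produce $\Delta+k-2$ in the denominator, nor does it eliminate the dependence on $\delta$ that appears in Theorem~\ref{main}(iii), so $k$-connectivity must be used honestly, and then amortized across the entire run of the algorithm. The secondary delicate point is pinning the additive constant to exactly $2$, which depends on the precise choice of seed set and a careful analysis of the first step. Finally, for sharpness beyond the cycles one should exhibit an explicit family of $k$-connected graphs attaining the bound.
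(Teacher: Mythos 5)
First, a point of orientation: this paper does not prove Theorem \ref{main2} at all --- it is quoted from \cite{ACDP} for purposes of comparison, and the only thing the present paper says about it is that Theorem \ref{main} implies it, ``essentially from the monotonicity of the function $f(x)=\frac{x}{x+C}$.'' So your attempt has to stand on its own, and as written it does not: the step you yourself flag as ``the main obstacle'' --- the per-step ratio bound $\frac{\Delta-2}{\Delta+k-2}$ --- is never established, and the lever you propose for it is the wrong one. You assert that the naive bookkeeping of Theorem \ref{main}(iii) ``does not by itself produce $\Delta+k-2$ in the denominator'' and that one must use $k$-connectivity (via the cut $\partial C$ with $|\partial C|\ge k$) to force the restart vertex to have two colored neighbors. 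In fact the naive bookkeeping already suffices. At a stall, the restart vertex $u$ has at least one colored neighbor (it was either forced by one or is a seed vertex adjacent to $v$, exactly as in the paper's proof of Theorem \ref{main}(iii)), so the number of vertices you must color is $a(u)\le \deg(u)-k-1\le \Delta-k-1$, while at least $k$ vertices then get forced. Since $f(x)=\frac{x}{x+k}$ is increasing and $\Delta-k-1\le \Delta-2$ (because $k\ge 1$), the per-step ratio satisfies $\frac{a(u)}{a(u)+k}\le \frac{\Delta-k-1}{\Delta-1}\le\frac{\Delta-2}{\Delta+k-2}$. Your cut argument is a red herring that you never convert into a quantitative estimate; the only place $k$-connectivity is genuinely needed is the inequality $\delta\ge k$, which fixes the seed size at $\delta-k+1$ and rules out $\Delta=2$ for $k\ge3$.

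The second, unacknowledged gap is the additive constant. Summing the per-step inequalities gives $F_k(G)\le(\delta-k+1)+\frac{\Delta-2}{\Delta+k-2}\bigl(n-\delta-1\bigr)$, and showing this is at most $\frac{(\Delta-2)n+2}{\Delta+k-2}$ reduces, after clearing denominators, to $k(\delta-\Delta-k+3)\le 2$, which holds for every positive integer $k$ because $\delta\le\Delta$ and $k(3-k)\le 2$. You gesture at this (``pinning the additive constant to exactly $2$ \dots is delicate'') but never carry it out, and without it the argument is a plan rather than a proof. The parts you did nail down are fine: the $\Delta=2$ base case ($k\le 2$, paths and cycles), the observation that cycles give equality for $k\in\{1,2\}$, and the separate treatment of $\Delta\le k+1$ where $F_k\le 2$. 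With the corrected ratio estimate and the constant computation inserted, your approach does yield the theorem --- and, as the paper notes in Section 4, it actually yields the stronger bound of Theorem \ref{main} and Corollary \ref{cor} for $k\ge 2$.
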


It is mentioned in \cite{ACDP} that Theorem \ref{main2} improves upon Theorem \ref{main1} for $k$-connected graphs with $\delta \geq k$ and $k \leq 2$, but that Theorem \ref{main1} is superior to Theorem \ref{main2} whenever $k \geq 3$. It turns out that Theorem \ref{main}, and its Corollary \ref{cor}, are at least as good upper bounds as both of these theorems for every $k$. 

More precisely, when $k=1$, the upper bounds from Theorems \ref{main} and \ref{main2} are identical, and both better than Theorem \ref{main1}. On the other hand, when $k\geq 2$, Theorem \ref{main} is superior to both of Theorems \ref{main1} and \ref{main2}, and that result follows essentially from the monotonicity of the function $f(x)=\frac{x}{x+C}$ where $C>0$ is a constant.

\section{Concluding Remarks}
In this article, we have given a greedy algorithm to construct zero forcing and $k$-forcing sets. Corollaries to this construction solve an open problem posed in \cite{Meyer}. We have also given improvements on the non-constructive but similar theorems found in \cite{ACDP}, when $k\geq 2$. We leave as open problems the characterizations of the cases of equality of our results, particularly Corollaries \ref{cor1} and \ref{cor}.

\end{document}